\def\N{{{\Bbb N}}}
\def\Z{{{\Bbb Z}}}
\def\T{{{\Bbb T}}}
\def\R{{\Bbb R}}
\def\l{{\lambda }}
\def\a{{\alpha }}
\def\D{{\Delta }}
\def\a{{\alpha}}
\def\d{{\delta}}
\def\e{{\varepsilon}}
\def\s{{\sigma}}
\def\vp{{\varphi}}
\def\g{{\gamma }}
\def\){\right)}
\def\({\left(}
\def\supp{\operatorname{supp}}
\numberwithin{equation}{section}
\newtheorem{lemma}{Lemma}[section]
\newtheorem{theorem}{Theorem}[section]
\newtheorem{remark}{Remark}[section]
\begin{document}

\title[]{On generalized $K$-functionals in $L_p$ for $0<p<1$}

\author[Yurii
Kolomoitsev]{Yurii
Kolomoitsev$^{\text{a, 1, *}}$}
\address{Institute for Numerical and Applied Mathematics, G\"ottingen University, Lotzestr. 16-18, 37083 G\"ottingen, Germany}
\email{kolomoitsev@math.uni-goettingen.de}

\author[Tetiana
Lomako]{Tetiana
Lomako$^{\text{a}}$}
\address{Institute for Numerical and Applied Mathematics, G\"ottingen University, Lotzestr. 16-18, 37083 G\"ottingen, Germany}
\email{tetiana.lomako@uni-goettingen.de}

\thanks{$^\text{a}$Institute for Numerical and Applied Mathematics, G\"ottingen University, Lotzestr. 16-18, 37083 G\"ottingen, Germany}

\thanks{$^1$Support by the German Research Foundation in the framework of the RTG 2088}

\thanks{$^*$Corresponding author}

\thanks{E-mail address: kolomoitsev@math.uni-goettingen.de}

\date{\today}
\subjclass[2010]{26A33, 46E35, 42A10, 42A45, 41A30} \keywords{$K$-functional, $L_p$ with $0<p<1$, homogeneous multipliers, fractional derivatives, quadrature formula}

\begin{abstract}
We show that the Peetre $K$-functional between the space $L_p$ with $0<p<1$ and the corresponding smooth function space $W_p^\psi$ generated by the Weyl-type differential operator $\psi(D)$, where $\psi$ is a homogeneous function of any positive order, is identically zero. The proof of the main results is based on the properties of the de la Vall\'ee Poussin kernels and the quadrature formulas for trigonometric polynomials and entire functions of exponential type.
\end{abstract}

\maketitle

\section{Introduction}

The classical Peetre $K$-functional is defined by
$$
K(f,t;X,Y):=\inf_{g\in Y}(\Vert f-g\Vert_X+t|g|_Y),
$$
where $(X,\Vert \cdot\Vert_X)$ is a (quasi)-Banach space and $Y\subset X$ is a complete subspace with semi-norm $|\cdot|_Y$.
The $K$-functional is one of the main tool in the theory of interpolation spaces. Moreover, it has important applications in approximation theory. Namely, smoothness properties of a function as well as errors of various approximation methods can be efficiently expressed by means of $K$-functionals, especially when the classical moduli of smoothness cannot be applied, see, e.g.,~\cite{Di98}, \cite{Dr10}, \cite{K12}, \cite{KT20_2}, \cite{KT19m}.

In this paper, we are interested in the case, where $X$ is an $L_p$ space and $Y$ is a smooth function space $W_p^\psi$ generated by the Weil-type differential operator $\psi(D)$, where $\psi$ is a homogeneous function. The class of such differential operators includes, for example, the classical partial derivatives, Weyl and Riesz derivatives, the Laplace-operator and its (fractional) powers.
Let us consider the $K$-functional for the pair $(L_p(\T), W_p^\a(\T))$, where $\T$ is the circle and $W_p^\a(\T)$  is the fractional Sobolev space defined via the Weyl derivative of order $\a>0$, i.e.,
\begin{equation}\label{KK}
  K(f,\d^r;L_p,W_p^\a)=\inf_{g\in W_p^\a(\T)}(\|f-g\|_{L_p(\T)}+\d^r\|g^{(\a)}\|_{L_p(\T)}).
\end{equation}
It is well-known that if $1\le p\le \infty$, then this $K$-functional is equivalent to the classical modulus of smoothness of order $\a$, see~\cite{johnen} for the case $\a\in\N$ and~\cite{BDGS77} for arbitrary $\a>0$. A similar result for the Riesz derivative and special modulus of smoothness was established in~\cite{RS3}. Properties and applications of the $K$-functionals between the space $L_p$ on the torus $\T^d$ or $\R^d$ and the corresponding smooth function space $W_p^\psi$ with a particular homogeneous function $\psi$ were studied in~\cite{ARS19}, \cite{Di98}, \cite{KT20}, \cite{run}, \cite{Wil}. Also, there are many works dedicated to the study of $K$-functionals in different quasi-normed Hardy spaces $H_p$, $0<p<1$, see, e.g., \cite{K11}, \cite{K12}, \cite{KT19m}, \cite[Ch.~4]{Lu95}. In particular, as in the case of the Banach spaces $L_p$, the $K$-functional of type~\eqref{KK} in the quasi-normed Hardy spaces is equivalent to the corresponding modulus of smoothness of integer or fractional order, see, e.g., \cite{K11}, \cite{K12}, \cite[Ch.~4]{Lu95}.

In contrast to the case of Banach spaces and quasi-normed Hardy spaces,
the $K$-functionals in $L_p$ with $0<p<1$ are no longer relevant. Namely,
it was shown in~\cite{DHI} that the $K$-functional~\eqref{KK} with $0<p<1$ and the derivative of integer order $\a\in \N$ is identically zero.  In~\cite{run}, exploiting the approach from~\cite{DHI}, the same property was established for the $K$-functional between the space $L_p(\T^d)$ and the smooth function space $W_p^\psi(\T^d)$, where $\psi$ is a homogeneous function of order $\a\ge 1$ if $d=1$ and $\a\ge 2$ if $d\ge 2$. Note that the restriction on the parameter $\a$ is due to the fact that the proof of the above property in~\cite{run} is essentially based on the results in~\cite{DHI} obtained for the derivatives of integer orders.
But it is well known that a solution of problems involving fractional smoothness in $L_p$ with $0<p<1$ usually is more involved than its integer counterparts and requires very often development essentially new approaches, see, e.g.,~\cite{BL}, \cite{RS}, \cite{K17}, \cite{KL19}.

In the papers~\cite{KT20} and~\cite{KT19m}, it was stated without the proof that the $K$-functional  $K(f,t;L_p(\Omega),W_p^\a(\Omega))$, where $\Omega=\T^d$ or $\R^d$, is identically zero for any positive $\a>0$ and $0<p<1$. But, as it was pointed by S. Artamonov, this fact has not yet been established anywhere. The purpose of the present paper is to improve this drawback by showing that in the case $0<p<1$, the $K$-functional is identically zero for various differential operators $\psi(D)$ generated by a homogeneous function $\psi$ of any order $\a>0$. Our approach is different from one presented in~\cite{DHI} and~\cite{run} and is based on properties of the de la Vall\'ee Poussin kernels and the quadrature formulas for trigonometric polynomials and entire functions of exponential type.

\bigskip

\section{Notations and definitions}

Let $\R^d$  be the $d$-dimensional Euclidean space with elements $x=(x_1,\dots,x_d)$, and $(x,y)=x_1y_1+\dots+x_dy_d$,
$|x|=(x,x)^{1/2}$. Let $\N$ be the set of positive integers, $\Z^d$ be the integer lattice in $\R^d$, and $\T^d=\R^d / 2\pi \Z^d$. By $\{e_j\}_{j=1}^d$ we denote the standard basis in $\R^d$.
For $n\in\N$, the space of trigonometric polynomials of degree at most than $n$ is defined by
$$
\mathcal{T}_n={\rm span}\{e^{i(k,x)}: k\in [-n,n]^d\}.
$$

In what follows, $\Omega=\T^d$ or $\R^d$. As usual, the space $L_p(\Omega)$ consists of all measurable functions $f$ such that
for $0<p<\infty$
$$
\Vert f\Vert_{L_p(\Omega)}=\bigg(\int_{\Omega}|f(x)|^p
dx\bigg)^\frac1p<\infty.
$$
Note that  $\Vert f\Vert_{L_p(\Omega)}$ for $0<p<1$ is a quasi-norm satisfying $\Vert f+g\Vert^p_{L_p(\Omega)}\le \Vert f\Vert^p_{L_p(\Omega)}+\Vert g\Vert^p_{L_p(\Omega)}$.
By $C_0(\R^d)$, we denote the set of all continuous functions $f$ such that $\lim_{|x|\to\infty}f(x)=0$. For any $q\in (0,\infty]$, we set
$$
q_1=\left\{
      \begin{array}{ll}
        q, & \hbox{$0<q<1$,} \\
        1, & \hbox{$1\le q\le \infty$.}
      \end{array}
    \right.
$$
If $f\in L_1(\T^d)$, then its $k$-th Fourier coefficient is defined by
$$
\widehat{f}(k)=\frac1{(2\pi)^{d}}\int_{\T^d} f(x)e^{-i(x,k)}dx.
$$
By $\D_h^r f$, where $r\in\N$ and $h\in \R^d$, we denote the symmetric difference of the function $f$,
$$
\D_h^r f(x)=\sum_{\nu=0}^r (-1)^\nu \binom{r}{\nu}f(x-(\tfrac r2-\nu)h).
$$

We say that a function $\psi$ belongs to the class $\mathcal{H}_\a$, $\a\in\R$,
if  $\psi(\xi)\neq 0$ for $\xi\in \R^d\setminus \{0\}$, $\psi \in C^\infty(\R^d\setminus \{0\})$, and
$\psi$ is  a homogeneous function of order $\a$, i.e.,
$$
\psi(\tau \xi)=\tau^\a \psi(\xi),\quad \tau>0,\quad \xi\in\R^d.
$$
Any function $\psi\in \mathcal{H}_\a$ generates the Weyl-type differentiation operator as follows:
$$
\psi(D)\,:\,
\sum_{k\in \Z^d}  c_k e^{i(k,x)}\to {\sum_{k\in \Z^d\setminus\{0\}}} \psi(k) c_k e^{i(k,x)}.
$$
Important  examples of the Weyl-type  operators are the following:

\begin{enumerate}
  \item[-]\; the linear differential operator
$$
P_m({D})f=
\sum_{{}_{\quad k\in \Z_+^d}^{ k_1+\cdots+k_d=m}}a_k {D}^k f,\qquad
{D}^k=\frac{\partial^{k_1+\cdots+k_d}}{\partial x_1^{k_1}
\cdots \partial x_d^{k_d}},
$$
$\phantom{ahhhn}$with
$$
\psi(\xi)=\sum_{{}_{\quad k\in \Z_+^d}^{ k_1+\cdots+k_d=m}}a_k (i\xi_1)^{k_1}\dots(i\xi_d)^{k_d};
$$
  \item [-]\; the fractional Laplacian  $(-\Delta)^{\alpha/2}f$ with $\psi(\xi)=|\xi|^\alpha$, $\xi\in\R^d$;
  \item [-] \;
  the classical
Weyl derivative $f^{(\alpha)}$ with $\psi(\xi)=(i \xi)^\alpha$, $\xi\in\R$.
\end{enumerate}

Let $\psi\in \mathcal{H}_\a$, $\a>0$ and $0<p\le 1$. By ${W_p^\psi}(\T^d)$ we denote the space of $\psi$-smooth functions in $L_p(\T^d)$, i.e.,
$$
W_p^\psi(\T^d)=\left\{g\in L_1(\T^d)\,:\, \psi(D)g\in L_p(\T^d)\right\}
$$
with
$$
|g|_{W_p^\psi}=\|\psi(D)g\|_{L_p(\T^d)}.
$$

\bigskip

\section{Main result in the periodic case}

\begin{theorem}\label{th1}
  Let $0<p<1$, $0<q\le\infty$, $\a>\max\{0,d(1-\frac1q)\}$, and $\psi\in \mathcal{H}_\a$. Then, for any $f\in L_p(\T^d)$ and $\d>0$, we have
  \begin{equation*}
    K\big(f,\d,L_q(\T^d),W_p^\psi(\T^d)\big)=0.
  \end{equation*}
\end{theorem}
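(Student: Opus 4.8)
The plan is to reduce the assertion to the case of a single trigonometric polynomial and then, for such a polynomial, to exhibit an explicit competitor $g$ whose $\psi$-derivative is a sum of sharp spikes of negligible $L_p$-quasinorm. First I would use the de la Vall\'ee Poussin means $V_n*f$, which reproduce trigonometric polynomials and converge to $f$ in $L_q(\T^d)$ (for $q<\infty$, and for continuous $f$ when $q=\infty$). Combined with the quasi-triangle inequality for the $K$-functional, namely
$K(f,\d;L_q,W_p^\psi)\le C\|f-T\|_{L_q(\T^d)}+C\,K(T,\d/C;L_q,W_p^\psi)$ for any $T\in\mathcal{T}_n\subset W_p^\psi(\T^d)$, this reduces everything to proving that $K(T,\d;L_q,W_p^\psi)=0$ for an arbitrary trigonometric polynomial $T$.

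Fix such a $T$, which I may assume has mean zero (its constant term lies in the kernel of $\psi(D)$ and contributes nothing), and set $S=\psi(D)T$, again a mean-zero trigonometric polynomial. Let $G$ be the periodic fundamental solution of $\psi(D)$, i.e. $\widehat G(k)=1/\psi(k)$ for $k\ne0$ and $\widehat G(0)=0$, so that $T=G*S$. Here the hypothesis $\a>d(1-\tfrac1q)$ enters for the first time: since $\psi$ is homogeneous of order $\a$, the coefficients $1/\psi(k)$ behave like $|k|^{-\a}$ and $G$ has a singularity of type $|x|^{\a-d}$ at the origin, whence $G\in L_q(\T^d)$ precisely under this condition. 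On the uniform grid $y_{\bm j}=2\pi\bm j/N$, $\bm j\in\{0,\dots,N-1\}^d$, I then consider the sampled convolution $g_0=N^{-d}\sum_{\bm j}S(y_{\bm j})\,G(\cdot-y_{\bm j})$, obtained by applying the rectangle quadrature to $T=G*S$.

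A Fourier computation gives $\widehat{g_0}(k)=\widehat G(k)\,N^{-d}\sum_{\bm j}S(y_{\bm j})e^{-i(k,y_{\bm j})}$, and discrete orthogonality (exactness of the quadrature for trigonometric polynomials) shows that, once $N>2n$, the inner sum equals $\widehat S(k)$ for $\|k\|_\infty\le n$ and is an aliased copy of the low frequencies otherwise. Hence $\widehat{g_0}(k)=\widehat T(k)$ for $\|k\|_\infty\le n$, while $g_0-T$ is carried by frequencies $\|k\|_\infty\ge N-n$ with coefficients bounded by $C(S)\,|\widehat G(k)|$. Estimating the $L_q$-norm of this high-frequency tail and showing it tends to $0$ as $N\to\infty$ is the main obstacle; I expect to control it through the decay $|\widehat G(k)|\lesssim|k|^{-\a}$ together with $\a>d(1-\tfrac1q)$, using Hausdorff--Young when $q\ge2$ and Nikolskii-type (or direct kernel) estimates for smaller $q$. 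Fixing $N$ so that $\|g_0-T\|_{L_q(\T^d)}<\e$ completes the approximation step.

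It remains to replace $G$ by a smooth kernel that leaves $g_0$ essentially unchanged in $L_q$ but makes the penalty term small in $L_p$. I would fix a smooth approximate identity $\eta_h$ on $\T^d$ (a bump of width $h$ with $(2\pi)^{-d}\int\eta_h=1$), put $\Gamma_h=G*\eta_h=\psi(D)^{-1}(\eta_h-1)$, and set $g_h=N^{-d}\sum_{\bm j}S(y_{\bm j})\,\Gamma_h(\cdot-y_{\bm j})\in W_p^\psi(\T^d)$. Since $\sum_{\bm j}S(y_{\bm j})=0$, the constant cancels and $\psi(D)g_h=N^{-d}\sum_{\bm j}S(y_{\bm j})\,\eta_h(\cdot-y_{\bm j})$ is a sum of disjointly supported spikes for small $h$; because $p<1$ one has $\|\eta_h\|_{L_p(\T^d)}\sim h^{d(1-p)/p}\to0$, so $\|\psi(D)g_h\|_{L_p(\T^d)}^p\le N^{-dp}\sum_{\bm j}|S(y_{\bm j})|^p\|\eta_h\|_{L_p(\T^d)}^p\to0$ as $h\to0$, with $N$ fixed. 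The decisive point is that $\eta_h$ is the derivative $\psi(D)g_h$ itself, so no factor $h^{-\a}$ appears and the estimate is insensitive to the size of $\a$ and $p$. Finally $\Gamma_h\to G$ in $L_q(\T^d)$ yields $g_h\to g_0$ in $L_q(\T^d)$, so a small enough $h$ gives both $\|T-g_h\|_{L_q(\T^d)}<C\e$ and $\d\|\psi(D)g_h\|_{L_p(\T^d)}<\e$; since $\e>0$ is arbitrary, $K(T,\d;L_q,W_p^\psi)=0$, and the reduction finishes the proof.
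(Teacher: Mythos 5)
Your plan has the right skeleton and, at the final step, uses exactly the mechanism that makes the theorem true: the penalty term $\|\psi(D)g_h\|_{L_p(\T^d)}$ is a finite combination of $L_1$-normalized spikes, and for $p<1$ such spikes are $L_p$-null ($\|\eta_h\|_{L_p}\sim h^{d(1-p)/p}\to 0$). This is the same mechanism as in the paper, where the spikes are the band-limited kernels $\mathcal{V}_{2^n}(\cdot-t_\ell)$ with $\|\mathcal{V}_{2^n}\|_{L_p}\lesssim \|V_{2^n}\|_{L_p}\lesssim 2^{d(1-\frac1p)n}\to0$. The architecture, however, is genuinely different: the paper never introduces the true fundamental solution $G$ with $\widehat G(k)=1/\psi(k)$. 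Instead it writes $T_\mu$ as the discretization of a convolution of \emph{two trigonometric polynomials}, namely $\psi_1(D)T_\mu$ and $\tilde\psi(D)\mathcal{V}_{2^m}$ with $\psi_1(\xi)=\psi(\xi)/(\sin^2\xi_1+\dots+\sin^2\xi_d)$, so that the quadrature formula \eqref{qu} is \emph{exact} and there is no aliasing error whatsoever; the only approximation error is the telescoping sum $\sum_{\nu=m}^{n-1}\|\tilde\psi(D)(\mathcal{V}_{2^{\nu+1}}-\mathcal{V}_{2^\nu})\|_q^{q_1}$, which Lemma~\ref{leval} and the Nikolskii inequality control precisely under $\a>\max\{0,d(1-\frac1q)\}$.

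The genuine gap in your version is the step you yourself flag as the main obstacle: the proof that $\|g_0-T\|_{L_q(\T^d)}\to0$ as $N\to\infty$, and the tools you name do not cover the hard range. Hausdorff--Young works for $2\le q\le\infty$, since $\sum_{\|k\|_\infty\ge N-n}|\widehat G(k)|^{q'}<\infty$ exactly when $\a q'>d$, i.e.\ $\a>d(1-\frac1q)$. But for $q<2$ and small $\a$ (e.g.\ $q\le1$, where the hypothesis is merely $\a>0$) the $\ell_{q'}$ and $\ell_2$ tails of $\widehat G$ diverge once $\a\le d/2$, a crude coefficient bound on the aliased copies gives only $\|g_0-T\|_q^{q}\lesssim C(S)^{q}N^{-\a q}\sum_{m\ne0}|m|^{-\a q}$, which needs $\a>d/q$, and Nikolskii-type inequalities go the wrong way for high-pass tails. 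What is actually required is a direct quadrature-error estimate for the non-smooth integrand $y\mapsto S(y)G(x-y)$ (say via a dyadic decomposition around the singularity of $G$, aiming at $\|g_0-T\|_{L_1}\lesssim C(S)N^{-\a}$ and then $L_q\subset L_1$ on $\T^d$ for $q\le1$); this is plausible but unproved in your sketch, and it additionally rests on unverified structural facts about the periodic kernel $G$ --- that it is smooth off the origin with singularity of type $|x|^{\a-d}$ (with logarithmic corrections when $\a-d\in2\Z_{\ge0}$), which needs H\"ormander's theorems on homogeneous distributions plus a periodization argument, machinery the paper invokes only in the non-periodic case and deliberately avoids on $\T^d$ by keeping every object band-limited. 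So: correct reduction, correct spike mechanism for the $L_p$ penalty, but the central aliasing estimate --- the very difficulty the paper's construction is engineered to eliminate --- is left open.
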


To prove this theorem, we need the following auxiliary results and notations.
In what follows, the de la Vall\'{e}e Poussin type kernel is defined by
\begin{equation*}
    V_n(x):=\sum_{k\in\Z^d} v\left(\frac kn\right)e^{i(k,x)},
\end{equation*}
where $v\in C^\infty(\R^d)$,
$v(\xi)=1$ for $\xi\in [-1,1]^d$ and $v(\xi)=0$ for $\xi\in \R^d\setminus [-2,2]^d$.

\begin{lemma}\label{leval} {\sc (See~\cite[Ch. 4 and Ch. 9]{TB}.)}
Let $0<p\le 1$ and $\vp\in C^\infty(\R^d)$ have a compact support. Then
$$
\sup_{\e>0} \e^{d(1-\frac1p)}\bigg\Vert
\sum_{k\in \Z^d}\varphi(\e k)  e^{i(k,x)}\bigg\Vert_{L_p(\T^d)}<\infty.
$$
In particular, $\|V_n\|_{L_p(\T^d)}\le c_p n^{d(1-\frac1p)}$.
\end{lemma}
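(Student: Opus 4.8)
The plan is to realize the trigonometric sum
$$
g_\e(x):=\sum_{k\in\Z^d}\vp(\e k)\,e^{i(k,x)}
$$
as a periodization of a rescaled, rapidly decreasing function, and then exploit the concentration of that function together with the $p$-subadditivity of $\Vert\cdot\Vert_{L_p}^p$ for $p\le1$. First I would apply the Poisson summation formula. Writing $\widetilde\vp(y):=\int_{\R^d}\vp(u)e^{i(u,y)}\,du$ for the (inverse) Fourier transform of $\vp$, a direct computation with the substitution $u=\e\xi$ in the Fourier transform of $\xi\mapsto\vp(\e\xi)e^{i(\xi,x)}$ yields, for each fixed $x$,
$$
g_\e(x)=\e^{-d}\sum_{m\in\Z^d}\widetilde\vp\Big(\frac{x-2\pi m}{\e}\Big).
$$
Since $\vp\in C^\infty(\R^d)$ has compact support, $\widetilde\vp$ is a Schwartz function, so all series involved converge absolutely and uniformly and the use of Poisson summation is legitimate; this is the only analytic point that requires genuine care.

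Next I would estimate the quasi-norm over $\T^d=[-\pi,\pi]^d$. Using $\big|\sum_m a_m\big|^p\le\sum_m|a_m|^p$, valid for $0<p\le1$,
$$
\Vert g_\e\Vert_{L_p(\T^d)}^p\le\e^{-dp}\sum_{m\in\Z^d}\int_{[-\pi,\pi]^d}\Big|\widetilde\vp\Big(\frac{x-2\pi m}{\e}\Big)\Big|^p\,dx.
$$
The translated cubes $[-\pi,\pi]^d-2\pi m$, $m\in\Z^d$, tile $\R^d$, so after the substitutions $y=x-2\pi m$ and $z=y/\e$ the sum of integrals collapses to a single integral over all of $\R^d$, producing the scaling factor $\e^d$:
$$
\Vert g_\e\Vert_{L_p(\T^d)}^p\le\e^{-dp}\,\e^{d}\int_{\R^d}|\widetilde\vp(z)|^p\,dz=\e^{d(1-p)}\int_{\R^d}|\widetilde\vp(z)|^p\,dz.
$$

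The crux is that the last integral is finite for every $p>0$: being Schwartz, $\widetilde\vp$ decays faster than any negative power of $|z|$, so $|\widetilde\vp|^p$ is integrable over $\R^d$ no matter how small $p$ is. Setting $C:=\int_{\R^d}|\widetilde\vp(z)|^p\,dz<\infty$ and taking $p$-th roots gives $\Vert g_\e\Vert_{L_p(\T^d)}\le C^{1/p}\e^{-d(1-1/p)}$, which rearranges to the asserted uniform bound $\sup_{\e>0}\e^{d(1-1/p)}\Vert g_\e\Vert_{L_p(\T^d)}<\infty$. The statement for $V_n$ is the special case $\vp=v$, $\e=1/n$, where $\e^{-d(1-1/p)}=n^{d(1-1/p)}$, yielding $\Vert V_n\Vert_{L_p(\T^d)}\le c_p\,n^{d(1-1/p)}$.

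The main obstacle is conceptual rather than computational: one must recognize that the right object to estimate is the physical-space kernel $\widetilde\vp$, and that its Schwartz decay is precisely what renders $|\widetilde\vp|^p$ integrable for arbitrarily small $p$. This is exactly where the $C^\infty$ smoothness of $\vp$ is indispensable — mere continuity with compact support would give only polynomial decay of $\widetilde\vp$, and the governing integral could diverge for small $p$. Once Poisson summation is set up with the correct normalization, the remaining tiling and scaling manipulations are routine.
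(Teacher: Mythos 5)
Your proof is correct, and it is essentially the standard argument behind this lemma: the paper itself gives no proof, citing Trigub--Belinsky instead, and the proof there rests on exactly the same ingredients you use --- Poisson summation to write the polynomial as the $2\pi\Z^d$-periodization of $\e^{-d}\widetilde\vp(\cdot/\e)$, the $p$-subadditivity $\vert\sum_m a_m\vert^p\le\sum_m\vert a_m\vert^p$, the tiling of $\R^d$ by the translated cubes, and the Schwartz decay of $\widetilde\vp$ (guaranteed by $\vp\in C_c^\infty$) to make $\int_{\R^d}\vert\widetilde\vp\vert^p$ finite for every $p>0$. All steps, including the normalization in the Poisson summation formula and the final scaling $\e^{d(1-p)/p}=\e^{-d(1-1/p)}$, check out, so your write-up in effect supplies the proof the paper delegates to the reference.
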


We will also use the following quadrature formula and the Marcinkiewicz-Zygmund inequality.

\begin{lemma}\label{leqmz}
  Let $T_n\in \mathcal{T}_n$, $t_{k,n}=\frac{2\pi k}{2n+1}$, $k\in [0,2n]^d$, and $0<p<\infty$. Then
  \begin{equation}\label{qu}
    \frac1{(2\pi)^d}\int_{\T^d} T_n(x)dx=\frac1{(2n+1)^d}\sum_{k\in [0,2n]^d}T_n\(t_{k,n}\)
  \end{equation}
and
\begin{equation}\label{mz}
    \frac1{(2n+1)^d}\sum_{k\in [0,2n]^d}\left|T_n\(t_{k,n}\)\right|^p\le C_p\|T_n\|_{L_p(\T^d)}^p.
  \end{equation}
\end{lemma}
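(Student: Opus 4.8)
The two assertions are of rather different character, so I would treat them separately; \eqref{qu} is elementary, while \eqref{mz} carries the genuine difficulty of the range $0<p<1$. For the quadrature formula I would expand $T_n(x)=\sum_{m\in[-n,n]^d}c_m e^{i(m,x)}$ and substitute it into the right-hand side of \eqref{qu}. Since the node set is a tensor grid, the exponential sum factorizes over the coordinates, leaving in each coordinate the Gauss sum $\frac1{2n+1}\sum_{j=0}^{2n}e^{2\pi i m_\ell j/(2n+1)}$, which equals $1$ when $(2n+1)\mid m_\ell$ and $0$ otherwise. As every frequency of $T_n$ satisfies $|m_\ell|\le n<2n+1$, only $m=0$ survives and the sum collapses to $c_0=\frac1{(2\pi)^d}\int_{\T^d}T_n$, which is exactly \eqref{qu}.

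For \eqref{mz} the starting point is the reproducing identity $T_n=T_n*V_n$, valid because $\widehat{V_n}(m)=v(m/n)=1$ on $[-n,n]^d$. Combined with Poisson summation and the fact that $v\in C^\infty$ has compact support, so that $V_n$ is the $2\pi\Z^d$-periodization of $n^d\check v(n\cdot)$ with $\check v$ in the Schwartz class, this yields the pointwise decay $|V_n(u)|\le c_N n^d(1+n\rho(u))^{-N}$ for every $N$, where $\rho(u)$ denotes the distance from $u$ to $2\pi\Z^d$. Inserting this into the convolution gives the $L_1$-type majorant $|T_n(x)|\le c_N n^d\int_{\T^d}|T_n(y)|(1+n\rho(x-y))^{-N}\,dy$.

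The crux, and the step I expect to cost the most work, is upgrading this $L_1$-type bound to an $L_p$-type pointwise inequality, since for $p<1$ one cannot take the $p$-th power inside the integral. Here I would introduce the Peetre-type maximal quantity $M(x)=\sup_{z}|T_n(x-z)|(1+n\rho(z))^{-a}$ with $a>d/p$, split $|T_n(y)|=|T_n(y)|^{1-p}|T_n(y)|^p$, estimate $|T_n(y)|^{1-p}\le M(x)^{1-p}(1+n\rho(x-y))^{a(1-p)}$, take the decay exponent $N=a$, and use the submultiplicativity $1+n\rho(x-y)\le(1+n\rho(x-w))(1+n\rho(w-y))$ to re-center the kernel at $x$. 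After taking the supremum over $z$ and dividing by the finite factor $M(x)^{1-p}$ (finite since $T_n$ is a bounded polynomial), this leads to
\begin{equation*}
|T_n(x)|^p\le M(x)^p\le c\,n^d\int_{\T^d}\frac{|T_n(y)|^p}{(1+n\rho(x-y))^{ap}}\,dy,\qquad ap>d.
\end{equation*}

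Finally I would evaluate this inequality at the nodes $t_{k,n}$ and sum over $k\in[0,2n]^d$, interchanging sum and integral to obtain
\begin{equation*}
\sum_{k\in[0,2n]^d}|T_n(t_{k,n})|^p\le c\,n^d\int_{\T^d}|T_n(y)|^p\Big(\sum_{k\in[0,2n]^d}(1+n\rho(t_{k,n}-y))^{-ap}\Big)dy.
\end{equation*}
Since the nodes are $\asymp 1/n$-separated in $\R^d$ and $ap>d$, the inner sum is bounded uniformly in $y$ and $n$, by comparison with the convergent quantity $n^d\int_{\R^d}(1+n|u|)^{-ap}\,du$. Hence the right-hand side is at most $c\,n^d\|T_n\|_{L_p(\T^d)}^p$, and dividing by $(2n+1)^d\asymp n^d$ yields \eqref{mz}.
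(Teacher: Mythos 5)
Your proof is correct, but for the harder half \eqref{mz} it takes a genuinely different route from the paper. The paper disposes of the whole lemma in two sentences by tensorization: equality \eqref{qu} is obtained by applying the univariate quadrature formula for trigonometric polynomials (Zygmund, Ch.~X, (2.5)) in each variable one after another, and \eqref{mz} by iterating the univariate Marcinkiewicz--Zygmund inequality of Lubinsky, M\'at\'e and Nevai (which covers $0<p<1$) one coordinate at a time, applying it in $x_1$ for each fixed $(x_2,\dots,x_d)$, then evaluating at the nodes in $x_2$, and so on. Your treatment of \eqref{qu} via discrete orthogonality on the tensor grid is essentially the same computation as the cited univariate formula, carried out directly. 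For \eqref{mz}, however, you reprove the inequality from scratch: reproduction by the de la Vall\'{e}e Poussin kernel, Poisson summation for the decay bound $|V_n(u)|\le c_N n^d(1+n\rho(u))^{-N}$, the Peetre-type maximal function $M(x)$ with $a>d/p$ to upgrade the $L_1$-majorant to an $L_p$-one --- the step that genuinely requires work when $p<1$, and which you execute correctly, including the re-centering via submultiplicativity of $1+n\rho(\cdot)$ with $N=a$ and the division by $M(x)^{1-p}$, legitimate since $M(x)\le\|T_n\|_\infty<\infty$ and the case $T_n\equiv 0$ is trivial --- and finally the uniform bound on $\sum_{k}(1+n\rho(t_{k,n}-y))^{-ap}$ using $ap>d$ and the $\asymp 1/n$ separation of the nodes. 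What your argument buys: it is self-contained, and, unlike the paper's tensorization, it never uses the product structure of the grid in proving \eqref{mz}, so it yields the inequality for arbitrary $\asymp 1/n$-separated node families, with constants depending only on $p$, $d$ and the cutoff $v$; only the exactness statement \eqref{qu} needs the arithmetic of the grid. What the paper's route buys is brevity and a clean reduction to classical one-dimensional results.
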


\begin{proof}
Equality~\eqref{qu} can be obtained by applying the univariate quadrature formulas for trigonometric polynomials in~\cite[Ch.~X, (2.5)]{Z} to each variable one after another. Similarly, using the univariate Marcinkiewicz-Zygmund inequality in~\cite[Theorem~2]{LMN}, we can prove~\eqref{mz}.
\end{proof}

\begin{proof}[Proof of Theorem~\ref{th1}]
In what follows, for simplicity, we write $\Vert f\Vert_p=\Vert f\Vert_{L_p(\T^d)}$.
Let $\e>0$ be fixed and let $T_\mu\in \mathcal{T}_\mu$ be such that
\begin{equation*}
  \|f-T_\mu\|_q^{q_1}<\frac\e3.
\end{equation*}
It is clear that
\begin{equation}\label{t1}
  K\big(f,1,L_q(\T^d),W_p^\psi(\T^d)\big)^{q_1}<\frac\e3+K\big(T_\mu,1,L_q(\T^d),W_p^\psi(\T^d)\big)^{q_1}.
\end{equation}
Let $m>\mu$, $m\in \N$. We set
\begin{equation*}
  \mathcal{V}_{2^m}(x)=-\frac1{4}\sum_{j=1}^d \D_{e_j}^2V_{2^m}(x)=\sum_{k\in\Z^d}(\sin^2 k_1+\dots+\sin^2 k_d )v\(\frac{k}{2^m}\)e^{i(k,x)}
\end{equation*}
and
\begin{equation*}
  \psi_1(\xi)=\left\{
                \begin{array}{ll}
                  \displaystyle 0, & \hbox{$\xi=0$,} \\
                  \displaystyle\frac{\psi(\xi)}{\sin^2 \xi_1+\dots+\sin^2 \xi_d}, & \hbox{$\xi\in \R^d\setminus \{0\}$.}
                \end{array}
              \right.
\end{equation*}
Then, denoting $\tilde{\psi}(\xi)=\tfrac1{\psi(\xi)}$, we see that equality~\eqref{qu} implies
\begin{equation}\label{t2}
\begin{split}
   T_\mu(x)&=\frac1{(2\pi)^d}\int_{\T^d}\psi_1(D)T_\mu(t)\cdot {\tilde\psi}(D)\mathcal{V}_{2^{m}}(x-t)dt+\widehat{T_\mu}(0)\\
   &=\frac1{(2M+1)^d}\sum_{\ell\in [0,2M]^d}\psi_1(D)T_\mu(t_\ell)\cdot {\tilde\psi}(D)\mathcal{V}_{2^{m}}(x-t_\ell)+\widehat{T_\mu}(0),
\end{split}
\end{equation}
where $M=\mu+2^{m+1}$ and $t_\ell=t_{\ell,M}=\frac{2\pi \ell}{2M+1}$.

Let $n>m$, $n\in \N$. From the definition of the $K$-functional, it follows that
\begin{equation}\label{t3}
  \begin{split}
     K&\big(T_\mu,1,L_q(\T^d),W_p^\psi(\T^d)\big)\\
&\le \bigg\|T_\mu-\frac1{(2M+1)^d}\sum_{\ell\in [0,2M]^d}\psi_1(D)T_\mu(t_\ell)\cdot {\tilde\psi}(D)\mathcal{V}_{2^{n}}(x-t_\ell)-\widehat{T_\mu}(0)\bigg\|_q\\
  &+\bigg\|\frac1{(2M+1)^d}\sum_{\ell\in [0,2M]^d}\psi_1(D)T_\mu(t_\ell)\cdot \mathcal{V}_{2^{n}}(x-t_\ell)\bigg\|_p=I_1+I_2.
  \end{split}
\end{equation}
Using~\eqref{t2}, \eqref{mz}, and  a telescopic sum, we obtain
\begin{equation}\label{t4}
  \begin{split}
     I_1^{q_1}&=\bigg\|\frac1{(2M+1)^d}\sum_{\ell\in [0,2M]^d}\psi_1(D)T_\mu(t_\ell)\cdot {\tilde\psi}(D)\big(\mathcal{V}_{2^{n}}(x-t_\ell)-\mathcal{V}_{2^{m}}(x-t_\ell)\big)\bigg\|_q^{q_1}\\
&\le \frac1{(2M+1)^{dq_1}}\sum_{\ell\in [0,2M]^d}|\psi_1(D)T_\mu(t_\ell)|^{q_1}
\big\|{\tilde\psi}(D)\big(\mathcal{V}_{2^{n}}(x-t_\ell)-\mathcal{V}_{2^{m}}(x-t_\ell)\big)\big\|_q^{q_1}\\
&\le C_{q_1} (2M+1)^{d(1-q_1)}\|\psi_1(D)T_\mu\|_{q_1}^{q_1}\big\|{\tilde\psi}(D)\(\mathcal{V}_{2^{n}}-\mathcal{V}_{2^{m}}\)\big\|_q^{q_1}\\
&\le C_{q_1} (2M+1)^{d(1-q_1)}\|\psi_1(D)T_\mu\|_{q_1}^{q_1}\sum_{\nu=m}^{n-1}
\big\|{\tilde\psi}(D)\(\mathcal{V}_{2^{\nu+1}}-\mathcal{V}_{2^{\nu}}\)\big\|_q^{q_1}.
  \end{split}
\end{equation}
Next, denoting
$$
\mathcal{N}_{2^\nu}(x)=\sum_{k\in\Z^d}\eta\(\frac{k}{2^\nu}\)e^{i(k,x)}\quad\text{with}\quad \eta(\xi)=\frac{v(\tfrac\xi2)-v(\xi)}{\psi(\xi)},
$$
we get
\begin{equation*}
  \begin{split}
     {\tilde\psi}(D)\(\mathcal{V}_{2^{\nu+1}}(x)-\mathcal{V}_{2^{\nu}}(x)\)&=\frac1{2^{\a\nu}}\sum_{k\in\Z^d}(\sin^2 k_1+\dots+\sin^2 k_d ) \eta\(\frac{k}{2^\nu}\)e^{i(k,x)}\\
&=-\frac1{2^{\a\nu+2}}\sum_{j=1}^d\D_{e_j}^2\mathcal{N}_{2^\nu}(x)
  \end{split}
\end{equation*}
and hence
\begin{equation}\label{t7}
  \begin{split}
     \big\|{\tilde\psi}(D)\(\mathcal{V}_{2^{\nu+1}}-\mathcal{V}_{2^{\nu}}\)\big\|_q^{q_1}&=
\frac1{2^{(\a\nu+2)q_1}}\bigg\|\sum_{j=1}^d\D_{e_j}^2\mathcal{N}_{2^\nu}\bigg\|_q^{q_1}\le \frac{4^{1-q_1}d}{2^{\a\nu q_1}}\big\|\mathcal{N}_{2^\nu}\big\|_q^{q_1}.
  \end{split}
\end{equation}
If $0<q\le 1$, then with the help of Lemma~\ref{leval}, we obtain
\begin{equation}\label{zv1}
  \big\|\mathcal{N}_{2^\nu}\big\|_q^{q}\le \frac{c_{q,\eta}}{2^{(\a q+d(1-q))\nu}},
\end{equation}
where we have usee the fact that $\eta\in C^\infty(\R^d)$ and $\supp\eta$ is compact. Next, for $1<q\le \infty$, exploiting the Nikolskii inequality of different metrics (see, e.g.,~\cite[4.3.6]{TB}) and again Lemma~\ref{leval}, we get
\begin{equation}\label{zv2}
\big\|\mathcal{N}_{2^\nu}\big\|_q\le 2^{d(1-\frac1q)\nu} \big\|\mathcal{N}_{2^\nu}\big\|_1\le  \frac{c_{1,\eta}}{2^{(\a +d(\frac1q-1))\nu}}.
\end{equation}
Thus, inequalities~\eqref{t4}, \eqref{t7}, \eqref{zv1}, \eqref{zv2}, and the condition $\a>\max\{0,d(1-\frac1q)\}$ implies that, for sufficiently large $m>m_0(T_\mu,\psi,q,\e)$,
\begin{equation}\label{t8}
  \begin{split}
     I_1^{q_1}&\le \frac{2^{q_1(\a+d(\frac1q-1))+2(1-q_1)}dC_{q_1}c_{{q_1},\eta}}{2^{q_1(\a+d(\frac1q-1))}-1} \|\psi_1(D)T_\mu\|_{q_1}^{q_1}\cdot\frac{(2^{m+2}+2\mu+1)^{d(1-{q_1})}}{2^{q_1(\a+d(\frac1q-1))m}}<\frac\e3.
  \end{split}
\end{equation}

Next, we estimate $I_2$. Application of the Marcinkiewicz-Zygmund inequality~\eqref{mz} and Lemma~\ref{leval} yields
\begin{equation}\label{t9}
  \begin{split}
     I_2^p&\le \frac1{(2M+1)^{dp}}\sum_{\ell\in [0,2M]^d}|\psi_1(D)T_\mu(t_\ell)|^p\|\mathcal{V}_{2^{n}}\|_p^p\\
&\le 4dC_p(2M+1)^{d(1-p)}\|\psi_1(D)T_\mu\|_p^p\|V_{2^{n}}\|_p^p\\
&\le 4dc_pC_p(2M+1)^{d(1-p)}\|\psi_1(D)T_\mu\|_p^p\cdot2^{d(p-1)n}<\(\frac\e{3}\)^{p/q_1}
  \end{split}
\end{equation}
for sufficiently large $n>n_0(T_\mu,m,\psi,p,\e)$.

Finally, combining~\eqref{t1}, \eqref{t3}, \eqref{t8}, and \eqref{t9} with appropriate $n>n_0$ and $m>m_0$, we obtain that $K\big(f,1,L_q(\T^d),W_p^\psi(\T^d)\big)^{q_1}<\e$. This proves the theorem.
\end{proof}

\bigskip

\section{Main result in the non-periodic case}
To formulate an analogue of Theorem~\ref{th1} for non-periodic functions, we introduce addtional notations.
As usual, by $\mathcal{S}$ and $\mathcal{S}'$ we denote the Schwartz space of infinitely differentiable rapidly decreasing  functions on $\R^d$ and its dual (the space of tempered distributions), respectively. The Fourier transform of $f\in L_1(\R^d)$ is given by
$$
\mathcal{F}f(\xi)=\widehat{f}(\xi)=\frac1{(2\pi)^{d/2}}\int_{\R^d} f(x)e^{-i(x,\xi)}dx.
$$
As usual,
$$
f*g(x)=\int_{\R^d}f(y)g(x-y)dx
$$
is the convolution of two appropriate functions $f$ and $g$.
For $f\in \mathcal{S}'$, we define the Fourier transform by $\langle\widehat{f},\widehat{\vp}\rangle=\langle f, g\rangle$, $\vp\in \mathcal{S}$. Next, by  $\mathcal{B}_{\s,p}=\mathcal{B}_{\s,p}(\R^d)$, $\s>0$, $0<p\le \infty$, we denote the Bernstein space of entire functions of exponential type~$\s$. That is, $f \in \mathcal{B}_{\s,p}$ if $f\in L_p(\R^d)\cap \mathcal{S}'(\R^d)$ and $\supp \mathcal{F}f\subset [-\s,\s]^d$.
Recall (see, e.g.~\cite[Ch.~3]{nikol-book} and~\cite[Sec.~1]{TribF}) that if $0<p<q\le \infty$, then the following continuous embedding holds
\begin{equation*}
  \mathcal{B}_{\s,p} \hookrightarrow \mathcal{B}_{\s,q}.
\end{equation*}

Let $\psi\in \mathcal{H}_\a$, $\a>0$ and $0<p\le 1$. Similarly as in the periodic case, by ${W_p^\psi}(\R^d)$ we denote the space of $\psi$-smooth functions in $L_p(\R^d)$, that is,
$$
W_p^\psi(\R^d)=\left\{g\in \mathcal{S}(\R^d)\,:\, \psi(D)g\in L_p(\R^d)\right\}
$$
with
$$
|g|_{W_p^\psi(\R^d)}=\|\psi(D)g\|_{L_p(\R^d)},\quad
\text{where}\quad
\psi(D)g=\mathcal{F}^{-1}(\psi \widehat{g}).
$$

\begin{theorem}\label{th2}
  Let $0<p<1$, $0<q\le \infty$, $\a>\max\{d(\frac1p-1), d(1-\frac1q)\}$, and $\psi\in \mathcal{H}_\a$. Then, for any $f\in L_p(\R^d)$ ($f\in C_0(\R^d)$ if $q=\infty$) and $\d>0$, we have
  \begin{equation}\label{t0+}
    K\big(f,\d,L_q(\R^d),W_p^\psi(\R^d)\big)=0.
  \end{equation}
\end{theorem}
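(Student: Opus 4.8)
The plan is to transplant the proof of Theorem~\ref{th1} from the torus to $\R^d$, replacing trigonometric polynomials by entire functions of exponential type and the periodic quadrature of Lemma~\ref{leqmz} by its counterpart for the Bernstein spaces $\B_{\s,p}$. First I would fix $\e>0$ and choose, by density of band-limited functions in $L_q(\R^d)$ (resp.\ in $C_0$ when $q=\infty$), a function $g\in\B_{\mu,\min\{p,q\}}$ with $\|f-g\|_q^{q_1}<\e/3$; the $q_1$-triangle inequality for $\|\cdot\|_q$ then reduces \eqref{t0+} to showing $K(g,1,L_q,W_p^\psi)^{q_1}<2\e/3$ for this single band-limited $g$, exactly as \eqref{t1} does in the periodic case. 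The embedding $\B_{\s,p}\hookrightarrow\B_{\s,q}$ recorded above will be used freely to pass between $L_p$- and $L_q$-control of the band-limited pieces.

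Next I would assemble the two non-periodic analogues of the auxiliary lemmas. In place of Lemma~\ref{leval}, a scaling argument gives, for $\vp\in C^\infty$ with compact support, $\|\mathcal F^{-1}[\vp(\e\,\cdot)]\|_{L_p(\R^d)}\le c_p\,\e^{-d(1-\frac1p)}$ (because $\mathcal F^{-1}\vp$ is Schwartz, hence in $L_p$), so the $\R^d$ de la Vall\'ee Poussin kernel $V_\s=\mathcal F^{-1}[v(\cdot/\s)]$ obeys $\|V_\s\|_p\le c_p\s^{d(1-\frac1p)}$. In place of Lemma~\ref{leqmz}, I would use the quadrature formula and the Plancherel--Polya (Marcinkiewicz--Zygmund) inequality for entire functions: for $F\in\B_{\s,p}$ and a step $h\asymp 1/\s$ one has $\int_{\R^d}F=h^d\sum_{k\in\Z^d}F(hk)$ and $h^d\sum_{k\in\Z^d}|F(hk)|^p\le C_p\|F\|_p^p$. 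With these in hand the construction of \eqref{t2}--\eqref{t9} transfers almost verbatim: I reproduce $g$ as a convolution of a sampled multiplier image of $g$ against $\tilde\psi(D)$ of a modified kernel, discretize the convolution by the entire-function quadrature, and split the resulting bound into the distance term $I_1$ (measured in $L_q$ and driven to $0$ by choosing the coarse level $m$ large, via the telescoping estimate \eqref{t4}--\eqref{t8} and the hypothesis $\a>d(1-\frac1q)$) and the smoothness term $I_2$ (measured in $L_p$ and driven to $0$ by choosing the fine level $n$ large, using $\|V_{2^n}\|_p\lesssim 2^{nd(1-\frac1p)}\to0$). Two bookkeeping changes are forced upon us: the finite sum over $[0,2M]^d$ becomes an infinite sampling sum over $\Z^d$, controlled termwise through the $p$-triangle inequality and the Plancherel--Polya bound; and the second differences entering the modified kernel must be taken with step $h\asymp1/M$ adapted to the band, so that the multiplier $\sum_j\sin^2(\tfrac h2\xi_j)$ vanishes only at $\xi=0$ on the relevant frequency range (its interior zeros on the lattice $\tfrac{2\pi}{h}\Z^d$, absent in the periodic setting, would otherwise make $\psi_1$ singular).

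The step I expect to be the genuine obstacle, and the source of the extra hypothesis $\a>d(\tfrac1p-1)$ not present in Theorem~\ref{th1}, is the behavior at the frequency origin. On $\T^d$ the singularity of $1/\psi$ at $\xi=0$ is harmless: it concerns only the single coefficient $\widehat{T_\mu}(0)$, split off in \eqref{t2}. On $\R^d$ there is no isolated zero frequency, and one must instead guarantee that the object fed to Plancherel--Polya — ultimately $\psi(D)g$ itself — actually lies in $L_p(\R^d)$. Since a band-limited $g$ with $\widehat g(0)\ne0$ has $\psi(D)g$ decaying only like $|x|^{-(d+\a)}$ at infinity (the homogeneous singularity $\psi(\xi)\sim|\xi|^\a$ of the symbol at the origin produces this tail), the requirement $\psi(D)g\in L_p(\R^d)$ is equivalent to $\int_{|x|>1}|x|^{-p(d+\a)}\,dx<\infty$, i.e.\ to $\a>d(\tfrac1p-1)$. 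This is exactly the condition that $\psi(D)$ carries band-limited $L_p$ functions into $L_p$, so that the building blocks genuinely belong to $W_p^\psi(\R^d)$ and the sampled data are summable; I would therefore organize the non-periodic construction so that the Marcinkiewicz--Zygmund step is applied to a function whose $L_p$-membership is governed by $\a>d(\tfrac1p-1)$ rather than to one with a worse origin singularity. Once this is arranged, the estimates of $I_1$ and $I_2$ close under $\a>\max\{d(\tfrac1p-1),d(1-\tfrac1q)\}$ and yield $K(g,1,L_q,W_p^\psi)^{q_1}<2\e/3$, completing the argument.
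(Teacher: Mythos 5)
Your outline follows the same skeleton as the paper's proof (band-limited approximation, reproduction of $g$ as a sampled convolution against $\tilde\psi(D)$ of a modified de la Vall\'ee Poussin kernel via Lemma~\ref{leqmz+}, telescoping for $I_1$, Plancherel--Polya for $I_2$), and you correctly diagnose both the lattice-zero issue for the sine multiplier and the fact that $\alpha>d(\frac1p-1)$ is spent at the frequency origin. But two steps, as you describe them, genuinely fail. First, you retain \emph{second} differences from the periodic case. On $\T^d$ order $2$ suffices for every $\alpha>0$ because $\psi_1$ is only ever evaluated at lattice points $k\ne 0$; on $\R^d$, however, the sampling formula \eqref{qu+} can only be invoked after showing $\tilde\psi(D)\mathcal{V}_{2^m}\in L_{q'}(\R^d)$ for some $q'$, and its symbol behaves near the origin like $h^{r}|\xi|^{\,r-\alpha}$ when differences of order $r$ and step $h$ are used. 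With $r=2$ this is a homogeneous singularity of negative order for $\alpha>2$: the decomposition \eqref{pred} then has $h_j(\xi)=\xi_j^{r}/\psi(\xi)$ homogeneous of \emph{negative} degree, $\widehat{h_j}$ is homogeneous of order $\alpha-r-d$, the decay estimate \eqref{con} gives nothing, and for $\alpha\ge d+2$ the symbol is not even locally integrable, so the kernel is not a function at all. Since the theorem permits arbitrarily large $\alpha$, this is a real gap; the paper's fix is to take differences of order $r_2=\lceil\alpha\rceil$ ($d=1$) or $2\lceil\alpha/2\rceil$ ($d\ge2$, kept even so the multiplier $\sum_j\sin^{r_2}(2^{-\mu}\xi_j)$ has no interior zeros on the band), ensuring $r_2-\alpha\ge0$. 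Your step adaptation $h\asymp1/M$ cures the zeros on the dual lattice but not this origin-order problem.

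Second, on the low-frequency side your tail heuristic $|x|^{-(d+\alpha)}$ is exactly the content of $\mathcal{F}^{-1}(\psi v)\in\mathcal{B}_{2,p}$ for $\alpha>d(\frac1p-1)$, but ``organize the construction so that the Marcinkiewicz--Zygmund step is applied to a function whose $L_p$-membership is governed by $\alpha>d(\frac1p-1)$'' is not yet a mechanism, and mere \emph{membership} $\psi(D)g\in L_p$ cannot close the argument: the low-frequency content of $g$ ends up inside the seminorm $\|\psi(D)h\|_{L_p}$ of the approximant $h$, so it must be made arbitrarily \emph{small}, not merely finite. Nor can you discard low frequencies at the outset: for $q\ge1$ the operator removing a neighborhood of the origin is not small on $L_q$, and density of functions with spectrum avoiding the origin holds only for $q\le1$ (Aleksandrov's theorem --- this is precisely the paper's concluding Remark). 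The paper resolves this with a third parameter $\lambda$: split $g_\mu=f_{\mu,\lambda}+g_{\mu,\lambda}$ with $f_{\mu,\lambda}=\mathcal{F}^{-1}\bigl(v(2^\lambda\cdot)\widehat{g_\mu}\bigr)$, sample only $\psi_2(D)g_{\mu,\lambda}$ --- whose symbol times $\widehat{g_{\mu,\lambda}}$ is Schwartz because the spectrum avoids the origin, which is essential since $\psi_2$ itself blows up at $0$ whenever $r_2>\alpha$ --- carry $f_{\mu,\lambda}$ inside the approximant, and then prove via the convolution inequality of Lemma~\ref{leconv} and homogeneity that
\begin{equation*}
\|\psi(D)f_{\mu,\lambda}\|_{L_p(\R^d)}\le c\,2^{-(\alpha-d(\frac1p-1))\lambda}\,C(\mu,p)\longrightarrow 0\quad(\lambda\to\infty),
\end{equation*}
which is where the hypothesis $\alpha>d(\frac1p-1)$ is spent quantitatively (the term $J_2$ in \eqref{d1}--\eqref{d3}). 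Without this three-parameter scheme (first $\lambda$, then $m$, then $n$) your version of $I_2$ does not close, because the sampled data would involve a symbol with an uncontrolled singularity at the origin and the residual low-frequency block would have no small bound in $L_p$.
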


\bigskip

To prove Theorem~\ref{th2}, we will need the following analogue of Lemma~\ref{leqmz} for entire functions of exponential type.

\begin{lemma}\label{leqmz+}
$1)$  Let $1\le p\le \infty$, $1/p+1/q=1$, and $\s>0$. Then, for all $g\in \mathcal{B}_{\pi\s,p}$ and $h\in \mathcal{B}_{\pi\s,q}$, we have
  \begin{equation}\label{qu+}
    (g*h)(x)=\frac1{\s^d}\sum_{k\in\Z^d}g\(\frac k\s\)h\(x-\frac k\s\),\quad x\in \R^d.
  \end{equation}
The series on the right-hand side of~\eqref{qu+} converges absolutely for all $x\in\R^d$ and this converges is uniform on each compact subset of $\R^d$.

$2)$ Let $0<p<\infty$ and $\s>0$. Then, for all $g\in \mathcal{B}_{\pi\s,p}$, we have
\begin{equation}\label{mz+}
    \frac1{\s^d}\sum_{k\in\Z^d}\left|g\(\frac k\s\)\right|^p\le C_p\|g\|_{L_p(\R^d)}^p.
  \end{equation}
\end{lemma}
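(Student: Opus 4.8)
The plan is to establish the Plancherel--Polya-type estimate~\eqref{mz+} first, since it is independent of~\eqref{qu+} and is also needed to control the convergence of the sampling series. To prove~\eqref{mz+}, I would reduce to the one-dimensional case exactly as in the proof of Lemma~\ref{leqmz}: for a fixed choice of all but one variable, the corresponding section of $g$ is again an entire function of exponential type $\pi\s$ lying in $L_p(\R)$ in that variable, so the classical univariate Plancherel--Polya inequality $\s^{-1}\sum_{k\in\Z}|g(k/\s)|^p\le C_p\|g\|_{L_p(\R)}^p$ (see, e.g.,~\cite{nikol-book}) applies coordinate by coordinate. Combining these $d$ univariate estimates by Fubini's theorem yields~\eqref{mz+}.

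For~\eqref{qu+}, fix $x$ and put $\Phi(y)=g(y)h(x-y)$. Since $1/p+1/q=1$, H\"older's inequality gives $\Phi\in L_1(\R^d)$, so that $(g*h)(x)=\int_{\R^d}\Phi(y)\,dy$ is well defined and, by Young's inequality, $g*h$ is bounded and continuous with $\mathcal F(g*h)=(2\pi)^{d/2}\widehat g\,\widehat h$ supported in $[-\pi\s,\pi\s]^d$. To handle the series, I note that $y\mapsto h(x-y)$ again lies in $\mathcal{B}_{\pi\s,q}$ with the same norm, so~\eqref{mz+} shows $\{g(k/\s)\}_k\in\ell_p$ and $\{h(x-k/\s)\}_k\in\ell_q$ with norms bounded uniformly in $x$; H\"older's inequality for sequences then gives the absolute convergence, and estimating the tails of the same $\ell_p$--$\ell_q$ product uniformly in $x$ gives the uniform convergence on compact sets.

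The core of the argument is the identity itself, which I would obtain by periodization. Writing $\widetilde\Phi(\xi)=\int_{\R^d}\Phi(y)e^{-i(y,\xi)}\,dy$, the function $\widetilde\Phi$ is continuous and, being the transform of a product of two functions with spectra in $[-\pi\s,\pi\s]^d$, is supported in $[-2\pi\s,2\pi\s]^d$. Hence the Poisson summation formula $\s^{-d}\sum_{k}\Phi(k/\s)=\sum_{m\in\Z^d}\widetilde\Phi(2\pi\s m)$ reduces to a \emph{finite} sum over $m\in\{-1,0,1\}^d$, so convergence is trivial. The term $m=0$ equals $\widetilde\Phi(0)=(g*h)(x)$, so it remains to show $\widetilde\Phi(2\pi\s m)=0$ for the $3^d-1$ indices $m\neq0$. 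Writing $\widetilde\Phi(2\pi\s m)=(g_m*h)(x)$ with $g_m(y)=g(y)e^{-2\pi i\s(m,y)}$, the spectrum of $g_m$ is the translate $\supp\widehat g-2\pi\s m\subset[-\pi\s,\pi\s]^d-2\pi\s m$, which meets $\supp\widehat h\subset[-\pi\s,\pi\s]^d$ only in a face of measure zero whenever $m\neq0$.

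The main obstacle is to turn this measure-zero overlap into the vanishing $g_m*h\equiv0$: since $p$ or $q$ may exceed $2$, the transforms $\widehat g,\widehat h$ are only tempered distributions, and a distribution supported in a hyperplane need not have zero inverse transform (for instance $e^{-i\pi\s x_1}$), so the bare support statement is insufficient. I would remove this difficulty by strictly separating the two spectra before they are compared. Replacing $h$ by its dilates $h_\l(y)=h(\l y)$, $\l>1$, which belong to $\mathcal{B}_{\pi\s/\l,q}$, and mollifying $g_m$ by a Schwartz factor so that its transform becomes a smooth function with spectrum in a cube only slightly larger than $[-\pi\s,\pi\s]^d$, one can choose the parameters so that the two spectra are \emph{disjoint}; then the product of a smooth function with a compactly supported distribution is well defined, and it vanishes because the supports are disjoint, whence $g_m*h_\l$ is zero for the regularized data. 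Passing to the limit --- first removing the mollifier, then letting $\l\to1^+$ --- and using the $L_p$- and $L_q$-continuity of dilation and mollification together with the uniform $\ell_p$--$\ell_q$ bounds from~\eqref{mz+} to justify interchanging limit, convolution, and summation, recovers~\eqref{qu+}. Alternatively, the one-dimensional case of~\eqref{qu+} is classical for Bernstein spaces and one may tensorize, but the regularization above seems the cleanest self-contained route.
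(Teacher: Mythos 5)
The paper does not actually prove this lemma: identity~\eqref{qu+} is quoted from~\cite[Lemma~6.2]{stens} and inequality~\eqref{mz+} from~\cite[4.3.1]{TB}, with the one-dimensional versions attributed to~\cite{SS00}. Your argument is therefore necessarily a different, self-contained route, and in substance it is correct. For~\eqref{mz+}, your coordinate-wise reduction to the univariate Plancherel--Polya inequality plus Fubini is exactly parallel to how the paper proves its periodic counterpart, Lemma~\ref{leqmz}; just note that the lemma is applied in the paper with $0<p<1$, so you need the univariate inequality in the full range $0<p<\infty$, which is in~\cite[4.3.1]{TB} or \cite{SS00} rather than in Nikol'skii's book (whose treatment is confined to $p\ge 1$). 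For~\eqref{qu+}, you correctly isolated the genuine difficulty that a naive Poisson-summation proof overlooks: the dual-lattice points $2\pi\sigma m$, $m\in\{-1,0,1\}^d\setminus\{0\}$, lie exactly on faces of the spectrum of $\Phi=g(\cdot)\,h(x-\cdot)$, and since $\widehat g,\widehat h$ may be mere distributions, support considerations alone do not annihilate these terms (your example $e^{-i\pi\sigma x_1}$ is precisely the relevant obstruction; note that in the endpoint case $p=\infty$, $q=1$ it is the continuity of $\widehat h$, forcing it to vanish on the boundary faces, that saves the day, and your regularization reproduces this automatically). The separate-the-spectra-then-pass-to-the-limit scheme does yield $\widetilde\Phi(2\pi\sigma m)=0$ for $m\neq 0$, and is a legitimate alternative to citing~\cite{stens}.

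Three repairable slips should be fixed. First, the dilation direction is inverted: with $h_\lambda(y)=h(\lambda y)$ one has $\widehat{h_\lambda}(\xi)=\lambda^{-d}\widehat h(\xi/\lambda)$, so for $\lambda>1$ the spectrum \emph{expands} to $[-\lambda\pi\sigma,\lambda\pi\sigma]^d$ and $h_\lambda\in\mathcal{B}_{\lambda\pi\sigma,q}$, worsening the overlap; you must take $0<\lambda<1$ (or dilate by $h(y/\lambda)$, $\lambda>1$) to shrink the spectrum strictly inside the cube. Second, your appeal to ``$L_q$-continuity of dilation'' fails at $q=\infty$ (the case $p=1$): dilation is not continuous on $L_\infty$. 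The limit $\lambda\to 1$ should instead be taken under the integral by dominated convergence, using $g_m\in L_1$, the uniform bound $\|h_\lambda\|_\infty\le\|h\|_\infty$, and the local uniform convergence $h_\lambda\to h$ (Bernstein's inequality makes $h$ Lipschitz); the same remark applies to removing the mollifier. Third, Poisson summation for an $L_1$ function gives, a priori, only that the periodization $u\mapsto\sigma^{-d}\sum_k\Phi((u+k)/\sigma)$ has the Fourier coefficients $\widetilde\Phi(2\pi\sigma m)$, i.e.\ equality with the (finite) trigonometric sum holds almost everywhere in $u$; to evaluate at $u=0$ you need the periodization to be continuous there. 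This follows from a sup-sampled (amalgam) strengthening of~\eqref{mz+},
\begin{equation*}
\sum_{k\in\Z^d}\sup_{u\in[0,1]^d}\Big|g\Big(\frac{k+u}{\sigma}\Big)\Big|^{p}\le C_p\,\sigma^d\|g\|_{L_p(\R^d)}^p,
\end{equation*}
which comes from the same local-mean estimate that proves~\eqref{mz+} itself, so it is within reach of your tools --- but it must be stated, since it is also what legitimizes your uniform-tail argument (and for $p=\infty$, where $\ell_p$ tails do not vanish, the tail must be placed on the $\ell_1$ factor coming from $h$). With these corrections your proof is complete and arguably more elementary than chasing down the cited sampling-theoretic sources.
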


Inequality~\eqref{qu+} can be found, e.g., in~\cite[Lemma~6.2]{stens}. For the Plancherel–Polya-type inequality~\eqref{mz+}, see, e.g.,~\cite[4.3.1]{TB}. Note that the one-dimensional versions of both inequalities were considered in~\cite{SS00}.

Recall also the following convolution inequality, see, e.g.,~\cite[1.5.3]{TribF}.

\begin{lemma}\label{leconv}
  Let $0<p\le 1$ and $\s>0$. Then, for all $f,g\in \mathcal{B}_{\s,p}$, we have
\begin{equation*}
  \|f*g\|_{L_p(\R^d)}\le c_{p}\s^{d(\frac1p-1)}\|f\|_{L_p(\R^d)}\|g\|_{L_p(\R^d)}.
\end{equation*}
\end{lemma}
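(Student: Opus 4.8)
The plan is to derive the inequality directly from the two parts of Lemma~\ref{leqmz+}, together with the Bernstein-space embedding $\mathcal{B}_{\s,p}\hookrightarrow\mathcal{B}_{\s,q}$ ($p<q$); no new machinery is needed. Since $f,g\in\mathcal{B}_{\s,p}$ with $0<p\le1$, this embedding places both of them in $\mathcal{B}_{\s,2}$, so in particular $f*g$ is a well-defined bounded continuous function whose Fourier transform is supported in $[-\s,\s]^d$, i.e.\ $f*g$ is again of exponential type $\s$. Writing $\s=\pi\s'$ with $\s'=\s/\pi$ and applying the sampling identity~\eqref{qu+} (part~1) with the conjugate pair $p_0=q_0=2$, where the sampled function is $f$ and the translated one is $g$, yields the exact representation
\begin{equation*}
  (f*g)(x)=\Big(\frac{\pi}{\s}\Big)^{d}\sum_{k\in\Z^d}f\Big(\frac{\pi k}{\s}\Big)\,g\Big(x-\frac{\pi k}{\s}\Big),\qquad x\in\R^d,
\end{equation*}
the series converging absolutely and uniformly on compacta.

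First I would take the $L_p(\R^d)$ quasi-norm of both sides. Because $0<p\le1$, the elementary inequality $|\sum_k a_k|^p\le\sum_k|a_k|^p$ combined with Tonelli's theorem gives $\|\sum_k h_k\|_{L_p(\R^d)}^p\le\sum_k\|h_k\|_{L_p(\R^d)}^p$ for any countable family; applied to $h_k(x)=(\pi/\s)^d f(\pi k/\s)\,g(x-\pi k/\s)$ and using the translation invariance of the $L_p$ quasi-norm, this produces
\begin{equation*}
  \|f*g\|_{L_p(\R^d)}^p\le\Big(\frac{\pi}{\s}\Big)^{dp}\|g\|_{L_p(\R^d)}^p\sum_{k\in\Z^d}\Big|f\Big(\frac{\pi k}{\s}\Big)\Big|^p.
\end{equation*}

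The second step is to control the sampling sum of $f$ by its own $L_p$ quasi-norm. Since $f\in\mathcal{B}_{\s,p}=\mathcal{B}_{\pi\s',p}$, the Plancherel--Polya inequality~\eqref{mz+} (part~2) with the lattice $\frac1{\s'}\Z^d=\frac{\pi}{\s}\Z^d$ gives $\sum_{k}|f(\pi k/\s)|^p\le C_p(\s/\pi)^d\|f\|_{L_p(\R^d)}^p$. Substituting this and simplifying the powers of $\s$ via $(\pi/\s)^{dp}(\s/\pi)^d=(\s/\pi)^{d(1-p)}$,
\begin{equation*}
  \|f*g\|_{L_p(\R^d)}^p\le C_p\Big(\frac{\s}{\pi}\Big)^{d(1-p)}\|f\|_{L_p(\R^d)}^p\,\|g\|_{L_p(\R^d)}^p ,
\end{equation*}
and taking the $p$-th root, with $d(1-p)/p=d(\tfrac1p-1)$, gives the asserted bound with $c_p=C_p^{1/p}\pi^{-d(1/p-1)}$.

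I do not expect a serious obstacle here; the two points requiring care are that part~1 of Lemma~\ref{leqmz+} is stated for conjugate exponents $\ge1$ (which is exactly why the embedding is invoked beforehand to move $f,g$ from $\mathcal{B}_{\s,p}$ into $\mathcal{B}_{\s,2}$ so that~\eqref{qu+} applies), and the legitimacy of pulling the $L_p$ quasi-norm inside the infinite sum (justified by $p$-subadditivity and Tonelli). As a cosmetic alternative, one may first rescale to $\s=\pi$—the inequality is scale-invariant, as a direct check of both sides under $f\mapsto f(\lambda\,\cdot)$ shows—and then read off the $\s$-dependence from the dilation; but keeping $\s$ general as above already tracks the exponent $d(\tfrac1p-1)$ correctly.
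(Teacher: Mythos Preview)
Your argument is correct. The paper does not give its own proof of this lemma at all; it simply records the inequality with a reference to \cite[1.5.3]{TribF}. Your derivation is a clean, self-contained proof built entirely out of the two parts of Lemma~\ref{leqmz+} (the sampling identity with the conjugate pair $2,2$, made available via the embedding $\mathcal{B}_{\s,p}\hookrightarrow\mathcal{B}_{\s,2}$, and the Plancherel--Polya inequality) together with $p$-subadditivity; the bookkeeping with $\s'=\s/\pi$ and the resulting exponent $d(\tfrac1p-1)$ is handled correctly. So while there is nothing to ``compare'' against in the paper, your route has the mild advantage of keeping the argument internal to the tools already introduced rather than appealing to an external source.
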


\begin{proof}[Proof of Theorem~\ref{th2}]
The proof of the theorem is similar to the one of Theorem~\ref{th1}. However, because several steps are different, we present a detailed proof.

In what follows, we denote $\Vert \cdot\Vert_p=\Vert \cdot\Vert_{L_p(\R^d)}$. Let $\e>0$ be fixed and let $g_\mu\in \mathcal{S}$ such that $\supp \widehat{g_\mu} \subset [-2^\mu,2^\mu]^d$ and
\begin{equation*}\label{e3+}
  \|f-g_\mu\|_q^{q_1}<\frac\e3.
\end{equation*}
Then, as in the periodic case, we have
\begin{equation}\label{t1+}
  K\big(f,1,L_q(\R^d),W_p^\psi(\R^d)\big)^{q_1}<\frac\e3+K\big(g_\mu,1,L_q(\R^d),W_p^\psi(\R^d)\big)^{q_1}.
\end{equation}
For $\l>\mu$, $\l\in\N$, we introduce the functions
$$
f_{\mu,\l}(x)=\mathcal{F}^{-1}\(v(2^\l\xi)\widehat{g_\mu}(\xi)\)(x)
$$
and
$$
g_{\mu,\l}(x)=g_\mu(x)-f_{\mu,\l}(x).
$$
Next, let $m>\mu$, $m\in\N$, and
$$
V_{2^m}(x)=\mathcal{F}^{-1}(v_m(\xi))(x)\quad\text{with}\quad v_m(\xi)=\(2^{-m}{\xi}\).
$$
Further, we denote
$$
d_2=\left\{
      \begin{array}{ll}
        1, & \hbox{$d=1$,} \\
        2, & \hbox{$d\ge 2$}
      \end{array}
    \right.\quad\text{and}\quad r_2=\left\{
      \begin{array}{ll}
        \lceil\a\rceil, & \hbox{$d=1$,} \\
        2\left\lceil\frac{\a}{2}\right\rceil, & \hbox{$d\ge 2$,}
      \end{array}
    \right.
$$
where $\lceil\cdot\rceil$ is the ceil function, and consider the functions
\begin{equation*}
  \begin{split}
      \mathcal{V}_{2^m}(x)&=\frac{1}{(2i)^{r_2}}\sum_{j=1}^d \D_{2^{-\mu}e_j}^{r_2} V_{2^m}(x)\\
      &=\mathcal{F}^{-1}\big((\sin^{r_2} 2^{-\mu}\xi_1+\dots+\sin^{r_2} 2^{-\mu}\xi_d)v_m(\xi)\big)(x)
   \end{split}
\end{equation*}
and
\begin{equation*}
  \psi_2(\xi)=\left\{
                \begin{array}{ll}
                  \displaystyle 0, & \hbox{$\xi=0$,} \\
                  \displaystyle \frac{\psi(\xi)}{\sin^{r_2} 2^{-\mu}\xi_1+\dots+\sin^{r_2} 2^{-\mu}\xi_d}, & \hbox{$\xi\in \R^d\setminus \{0\}$.}
                \end{array}
              \right.
\end{equation*}
Let us show that there exists $q>1$ such that
\begin{equation}\label{q1}
\psi_2(D)g_{\mu,\l}=\mathcal{F}^{-1}\(\psi_2 \widehat{g_{\mu,\l}}\)\in L_q(\R^d)
\end{equation}
and
\begin{equation}\label{q2}
  \tilde\psi(D)\mathcal{V}_{2^m}(x)=\mathcal{F}^{-1}\(\frac{v_m}{\psi_2}\)\in L_{q'}(\R^d),
\end{equation}
where $1/q+1/q'=1$ and $\tilde\psi=\frac1{\psi}$. Indeed, relation~\eqref{q1} is obvious since $\psi_2 \widehat{g_{\mu,\l}}\in \mathcal{S}$.
To verify~\eqref{q2}, we represent the function $\frac{v_m}{\psi_2}$ in the following form
\begin{equation}\label{pred}
  \frac{v_m(\xi)}{\psi_2(\xi)}=\sum_{j=1}^d \frac{\xi_j^{r_2}}{\psi(\xi)}\cdot\(\frac{\sin 2^{-\mu}\xi_j}{\xi_j}\)^{r_2}v_m(\xi)=\sum_{j=1}^d h_j(\xi)\vp_j(\xi),
\end{equation}
where
$$
h_j(\xi)=\frac{\xi_j^{r_2}}{\psi(\xi)}\in C^\infty(\R^d\setminus \{0\})\quad\text{and}\quad \vp_j(\xi)=\(\frac{\sin 2^{-\mu}\xi_j}{\xi_j}\)^{r_2}v_m(\xi)\in \mathcal{S}.
$$
Since $h_j$ is a homogeneous function of order $r_2-\a\ge 0$, we have that $\widehat{h_j}$ belongs to $C^\infty(\R^d\setminus \{0\})$ and it is homogeneous of order
$-(r_2-\a+d)$, see, e.g.~\cite[Theorems~7.1.16 and~7.1.18]{Ho}.
Thus, applying the properties of convolution and choosing $\s_m$ such that $\supp v_m\subset \{|\xi|<\s_m\}$, we obtain
\begin{equation}\label{con}
  \begin{split}
    |\mathcal{F}({h_j\vp_j})(\xi)|&=|\langle\widehat{h_j},\widehat{\vp_j}(\xi-\cdot)\rangle|
    =\bigg|\int_{\R^d}\widehat{h_j}(y)\widehat{\vp_j}(\xi-y)dy\bigg|\\
    &\le  \int_{|\xi-y|\le \s_m}|\widehat{h_j}(y)\widehat{\vp_j}(\xi-y)|dy\\
   &\le\max_{|\xi-y|\le \s_m}|\widehat{h_j}(y)| \int_{\R^d}|\widehat{\vp_j}(y)|dy\\
    &\le c_m'\max_{|\xi-y|\le \s_m}|y|^{-(r_2-\a+d)}\le c_m''|\xi|^{-\g}\\
  \end{split}
\end{equation}
for $|\xi|>2\s_m$ and $\g=r_2-\a+d\ge d$. Moreover, since $h_j\vp_j\in L_1(\R^d)$, it follows from the standard properties of the Fourier transform, that $\mathcal{F}({h_j\vp_j})\in C_0(\R^d)$. Therefore, $\mathcal{F}({h_j\vp_j})\in L_s(\R^d)$ for all $s>1$. In particular,  we have that $\mathcal{F}^{-1}({h_j\vp_j})\in L_{q'}(\R^d)$, which together with equality~\eqref{pred} implies~\eqref{q2}.

Now, taking into account~\eqref{q1}--\eqref{q2}, we can apply Lemma~\ref{leqmz+}, which yields
\begin{equation}\label{t2+}
\begin{split}
   g_{\mu,\l}(x)&=g_\mu(x)-f_{\mu,\l}(x)\\
   &=\int_{\R^d}\psi_2(D)g_{\mu,\l}(y) {\tilde\psi}(D)\mathcal{V}_{2^{m}}(x-y)dy\\
   &={M^{-d}}\sum_{\ell\in \Z^d}\psi_2(D)g_{\mu,\l}(t_\ell) {\tilde\psi}(D)\mathcal{V}_{2^{m}}(x-t_\ell),
\end{split}
\end{equation}
where $M=\mu+2^{m+1}$ and $t_\ell=\frac\ell{M}$.

Let $n>m$, $n\in\N$. We have
\begin{equation}\label{t3+}
  \begin{split}
     K&\big(g_\mu,1,L_q(\R^d),W_p^\psi(\R^d)\big)\\
 &\le \bigg\|g_\mu-M^{-d}\sum_{\ell\in \Z^d}\psi_2(D)g_{\mu,\l}(t_\ell) {\tilde\psi}(D)\mathcal{V}_{2^{n}}(x-t_\ell)-f_{\mu,\l}\bigg\|_p\\
  &+\bigg\|M^{-d}\sum_{\ell\in \Z^d}\psi_2(D)g_{\mu,\l}(t_\ell) \mathcal{V}_{2^{n}}(x-t_\ell)+\psi(D)f_{\mu,\l}\bigg\|_p=I_1+I_2.
  \end{split}
\end{equation}
Taking into account~\eqref{t2+} and using the Plancherel–Polya-type inequality~\eqref{mz+}, we obtain
\begin{equation}\label{t4+}
  \begin{split}
     I_1^{q_1}&=\bigg\|M^{-d}\sum_{\ell\in \Z^d}\psi_2(D)g_{\mu,\l}(t_\ell)\cdot {\tilde\psi}(D)\big(\mathcal{V}_{2^{n}}(x-t_\ell)-\mathcal{V}_{2^{m}}(x-t_\ell)\big)\bigg\|_q^{q_1}\\
&\le M^{-d{q_1}}\sum_{\ell\in \Z^d}|\psi_2(D)g_{\mu,\l}(t_\ell)|^{q_1}
\big\|{\tilde\psi}(D)\big(\mathcal{V}_{2^{n}}(x-t_\ell)-\mathcal{V}_{2^{m}}(x-t_\ell)\big)\big\|_q^{q_1}\\
&\le C_{q_1} M^{d(1-{q_1})}\|\psi_2(D)g_{\mu,\l}\|_{q_1}^{q_1}\big\|{\tilde\psi}(D)\(\mathcal{V}_{2^{n}}-\mathcal{V}_{2^{m}}\)\big\|_q^{q_1}\\
&\le C_{q_1} M^{d(1-{q_1})}\|\psi_2(D)g_{\mu,\l}\|_{q_1}^{q_1}\sum_{\nu=m}^{n-1}
\big\|{\tilde\psi}(D)\(\mathcal{V}_{2^{\nu+1}}-\mathcal{V}_{2^{\nu}}\)\big\|_q^{q_1}.
  \end{split}
\end{equation}
Note that in the above relations $\psi_2(D)g_{\mu,\l}\in L_p(\R^d)$ because $\psi_2 \widehat{g_{\mu,\l}}\in \mathcal{S}$.
Next, denoting
$$
\mathcal{N}_{2^\nu}(x)=\mathcal{F}^{-1}\(\eta\(2^{-\nu}\xi\)\)(x)\quad\text{with}\quad \eta(\xi)=\frac{v(\tfrac\xi2)-v(\xi)}{\psi(\xi)},
$$
we get
\begin{equation*}
  \begin{split}
     {\tilde\psi}(D)&\(\mathcal{V}_{2^{\nu+1}}(x)-\mathcal{V}_{2^{\nu}}(x)\)\\
&=\frac1{2^{\a\nu}}\mathcal{F}^{-1}\((\sin^{r_2} 2^{-\mu}\xi_1+\dots+\sin^{r_2} 2^{-\mu}\xi_d ) \eta\(2^{-\nu}\xi\)\)(x)\\
&=\frac1{2^{\a\nu}(2i)^{r_2}}\sum_{j=1}^d\D_{2^{-\mu}e_j}^{r_2}\mathcal{N}_{2^\nu}(x).
  \end{split}
\end{equation*}
Thus, taking into account that $\eta\in C^\infty(\R^d)$ and $\supp\eta$ is compact, we obtain
\begin{equation}\label{t7+}
  \begin{split}
     \big\|{\tilde\psi}(D)&\(\mathcal{V}_{2^{\nu+1}}-\mathcal{V}_{2^{\nu}}\)\big\|_q^{q_1}
=
\frac1{2^{(\a\nu+r_2)q_1}}\bigg\|\sum_{j=1}^d\D_{2^{-\mu}e_j}^{r_2}\mathcal{N}_{2^\nu}\bigg\|_q^{q_1}
\\
&\le \frac{2^{(1-q_1)r_2}d}{2^{\a\nu q_1}}\big\|\mathcal{N}_{2^\nu}\big\|_q^{q_1}
=
\frac{2^{(1-q_1)r_2}d\|\widehat{\eta}\|_q^{q_1}}{2^{q_1(\a+d(\frac1q-1))\nu}}
=
\frac{c_{q,\eta}}{2^{q_1(\a+d(\frac1q-1))\nu}}.
  \end{split}
\end{equation}
Then, combining~\eqref{t4+} and~\eqref{t7+}, it is easy to see that
\begin{equation}\label{t8+}
  \begin{split}
     I_1^{q_1}&\le \frac{2^{q_1(\a+d(\frac1q-1))+2(1-q_1)}dC_{q_1}c_{{q_1},\eta}}{2^{q_1(\a+d(\frac1q-1))}-1} \|\psi_1(D)g_{\mu,\l}\|_{q_1}^{q_1}\cdot\frac{(2^{m+2}+2\mu+1)^{d(1-{q_1})}}{2^{q_1(\a+d(\frac1q-1))m}}<\frac\e3
  \end{split}
\end{equation}
for sufficiently large $m>m_0(g_{\mu,\l},\psi,q,\e)$.

Further we find
\begin{equation}\label{d1}
  \begin{split}
     I_2^p&\le\bigg\|M^{-d}\sum_{\ell\in \Z^d}\psi_1(D)g_{\mu,\l}(t_\ell) \mathcal{V}_{2^{n}}(x-t_\ell)\bigg\|_p^p+\|\psi(D)f_{\mu,\l}\|_p^p=J_{1}^p+J_{2}^p.
  \end{split}
\end{equation}
First we estimate $J_2$. Taking into account that $\mathcal{F}^{-1}(\psi v)\in \mathcal{B}_{2,p}(\R^d)$ for $\a>d(1/p-1)$, see, e.g.,~\cite{RS} (this can also be verified as~\eqref{con}) and applying Lemma~\ref{leconv}, we obtain
\begin{equation*}
  \begin{split}
     J_2&=2^{-\a\l}\big\|\mathcal{F}^{-1}\(\psi(2^\l\cdot)v(2^\l\cdot)\widehat{g_{\mu}}\)\big\|_p
    =2^{-\a\l}\|\mathcal{F}^{-1}(\psi(2^\l\cdot)v(2^\l\cdot))*g_{\mu}\|_p\\
&\le c_p2^{-\a\l+d(\frac1p-1)(\mu+1)}\|\mathcal{F}^{-1}(\psi(2^\l\cdot)v(2^\l\cdot))\|_p\|g_\mu\|_p\\
&= c_p2^{-\a\l+d(\frac1p-1)(\mu+1)+d(\frac1p-1)\l}\|\mathcal{F}^{-1}(\psi v)\|_p\|g_\mu\|_p.
  \end{split}
\end{equation*}
Thus,  for sufficiently large $\l>\l_0(g_\mu,\psi,p,\e)$, we get that
\begin{equation}\label{d3}
  J_2^p<\frac12\(\frac\e3\)^{p/q_1}.
\end{equation}
Next, the Plancherel–Polya-type inequality~\eqref{mz+} yields
\begin{equation}\label{t9+}
  \begin{split}
     J_1^p&\le M^{-dp}\sum_{\ell\in\Z^d}|\psi_2(D)g_{\mu,\l}(t_\ell)|^p\|\mathcal{V}_{2^{n}}\|_p^p\\
&\le 2^{r_2}dC_pM^{d(1-p)}\|\psi_2(D)g_{\mu,\l}\|_p^p\|V_{2^{n}}\|_p^p\\
&= 2^{r_2}dC_pc_p(\mu+2^{m+1})^{d(1-p)}\|\psi_2(D)g_{\mu,\l}\|_p^p\|\widehat{v}\|_p^p\cdot2^{d(p-1)n}<\frac12\(\frac\e3\)^{p/q_1}
  \end{split}
\end{equation}
for sufficiently large $n>n_0(g_{\mu,\l},m,\psi,p,\e)$, which together with~\eqref{d1}, \eqref{d3}, and~\eqref{t9+} implies that
\begin{equation}\label{d4}
  I_2^{q_1}<\frac\e3.
\end{equation}

Finally, combining~\eqref{t1+}, \eqref{t3+}, \eqref{t8+}, and \eqref{d4} with appropriate $\l>\l_0$, $n>n_0$, and $m>m_0$, we obtain that $K\big(f,1,L_q(\R^d),W_p^\psi(\R^d)\big)^{q_1}<\e$, which proves the theorem.
\end{proof}

\begin{remark}
If we suppose that $0<q\le 1$ in Theorem~\ref{th2}, then~\eqref{t0+} holds for any $\a>0$.
Indeed, according to~\cite[Theorem~5.1 and Corollary~2.2]{A07}, there exists $g_\mu\in \mathcal{S}$ such that $\supp \widehat{g_\mu} \subset [-2^\mu,2^\mu]^d\setminus [-1,1]^d$ and $\|f-g_\mu\|_q^q<\frac\e3.$ Thus, in the proof of Theorem~\ref{th2}, we can put $\l=1$, $g_\mu=g_{\mu,1}$, and $f_{\mu,\l}=0$, which implies that we do not need to estimate the term $J_2$, where the restriction $\a>d(1/p-1)$ appears.
\end{remark}


\begin{thebibliography}{16}
{


\bibitem{A07} A. B. Aleksandrov,
Spectral subspaces of the space $L_p$, $p<1$. (Russian)
Algebra i Analiz \textbf{19} (2007), no.~3, 1--75; translation in
St. Petersburg Math. J. \textbf{19} (2008), no.~3, 327--374.

\bibitem{ARS19} S. Artamonov, K. Runovski, H.-J. Schmeisser, Approximation by bandlimited functions, generalized $K$-functionals and generalized moduli of smoothness, Anal. Math. \textbf{45} (2019), 1--24.

\bibitem{BL} E.~Belinsky, E.~Liflyand,  Approximation properties in $L_p$, $0<p<1$,
Functiones et Approximatio, \textbf{XXII} (1993), 189--199.

\bibitem{BDGS77}  P. L. Butzer, H. Dyckhoff, E. G\"{o}rlich, R. L. Stens, Best trigonometric approximation, fractional order derivatives
and Lipschitz classes, Can. J. Math. \textbf{ 29} (1977), 781--793.

\bibitem{DHI} Z.~Ditzian, V.~Hristov, K.~Ivanov, {Moduli of smoothness and
$K$-functional in $L_p$, $0<p<1$}, Constr. Approx. \textbf{11}
(1995), 67--83.

\bibitem{Di98} Z. Ditzian, Fractional derivatives and best approximation,
Acta Math. Hungar. \textbf{81} (1998), no.~4, 323--348.

\bibitem{Dr10} B. R. Draganov,  Exact estimates of the rate of approximation of convolution operators, J. Approx. Theory \textbf{162} (2010), no.~5, 952--979.

\bibitem{johnen}
H. Johnen, K. Scherer, On the equivalence of the $K$-functional and moduli of
continuity and some applications, Constructive theory of functions of several variables
(Proc. Conf., Math. Res. Inst., Oberwolfach 1976), Lecture Notes in Math., vol. 571,
Springer-Verlag, Berlin–Heidelberg 1977,  119--140.

\bibitem{Ho} L. H\"ormander, The analysis of Linear Partial Differential Operators I, Second edition, Springer-Verlag, 1990.

\bibitem{K11} Yu. S. Kolomoitsev, On moduli of smoothness and $K$-functionals of fractional order in the Hardy spaces (Russian) Ukr. Mat. Visn.~\textbf{8} (2011), no. 3, 421--446; translation in J. Math. Sci. \textbf{181} (2012), no. 1, 78--79.

\bibitem{K12}  Yu. S. Kolomoitsev, Approximation properties of generalized Bochner-Riesz means in the Hardy spaces $H_p$, $0<p\le 1$ (Russian) Mat. Sb. \textbf{203}:8 (2012), 79--96; translation in Sb. Math. \textbf{203}:8 (2012), 1151--1168.

\bibitem{K17} Yu. Kolomoitsev, On moduli of smoothness and averaged differences of fractional order, Fract. Calc. Appl. Anal. \textbf{20} (2017), no.~4, 988--1009.


\bibitem{KL19} Yu. Kolomoitsev, T. Lomako, Inequalities in approximation theory involving fractional smoothness in $L_p$, $0 < p < 1$. In: Abell M., Iacob E., Stokolos A., Taylor S., Tikhonov S., Zhu J. (eds) Topics in Classical and Modern Analysis. Applied and Numerical Harmonic Analysis. Birkh\"auser, Cham, 2019.

\bibitem{KT20} Yu. Kolomoitsev, S. Tikhonov, Properties of moduli of smoothness in $L_p(\R^d)$, J. Approx. Theory \textbf{257}, (2020), 105423.

\bibitem{KT20_2} Yu. Kolomoitsev, S. Tikhonov, Smoothness of functions vs. smoothness of approximation processes, Bull. Math. Sci. \textbf{10} (2020), no.~3, 2030002.

\bibitem{KT19m} Yu. Kolomoitsev, S. Tikhonov, Hardy-Littlewood and Ulyanov inequalities, Mem. Amer. Math. Soc. \textbf{271}, no.~1325, 2021.

\bibitem{Lu95} S. Z. Lu,   Four Lectures on Real $H_p$ Spaces, World Scientific Publishing Co., Inc., River Edge, NJ, 1995.


\bibitem{LMN} {D. S. Lubinsky, A. Mate, P. Nevai}, Quadrature sums involving $p$th powers of polynomials, {SIAM J. Math. Anal.} \textbf{18} (1987), 53--544.

\bibitem{nikol-book}
S. M. Nikol'skii, Approximation of Functions of Several Variables and Embedding
Theorems, Springer, New York, 1975.

\bibitem{run}  K. Runovski, Approximation of families of linear polynomial
operators, Disser. of Doctor of Science, Moscow State University,  2010.

\bibitem{RS}  K. Runovski, H.-J. Schmeisser,  On some
extensions of Berenstein's inequality for trigonometric polynomials, Funct. Approx. Comment. Math. {\bf 29} (2001), 125--142.

\bibitem{RS3} K.  Runovski, H.-J. Schmeisser,
General moduli of smoothness and approximation by families of linear polynomial operators,
New Perspectives on Approximation and Sampling Theory,
Applied and Numerical Harmonic Analysis, 2014,  269--298.


\bibitem{SS00} H.-J. Schmeisser, W. Sickel, Sampling theory and function spaces, in: Applied Mathematics Reviews, Vol. 1, World Scientific, 2000, pp. 205--284.


\bibitem{stens} R. L. Stens, Sampling with generalized kernels, in: J.R. Higgins, R.L. Stens (Eds.), Sampling Theory in Fourier and Signal Analysis: Advanced Topics, Clarendon Press, Oxford, 1999.

\bibitem{TribF}
H. Triebel, Theory of Function Spaces,  Basel: Birkh\"{a}user, 2010, Reprint of the 1983 original.

\bibitem{TB} R. M. Trigub, E. S. Belinsky, Fourier Analysis and
Appoximation of Functions, Kluwer, 2004.


\bibitem{Wil} G. Wilmes,  On Riesz-type inequalities and $K$-functionals related to Riesz potentials in $\R^N$,
Numer. Funct. Anal. Optim. {\bf 1} (1) (1979), 57--77.

\bibitem{Z}
A. Zygmund, Trigonometric Series,  Cambridge University Press, 1968.

}

\end{thebibliography}
\end{document}